\numberwithin{theorem}{section}
\numberwithin{lemma}{section}
\numberwithin{proposition}{section}
\numberwithin{corollary}{section}
\numberwithin{definition}{section}
\numberwithin{remark}{section}
\begin{document}

\title{Threshold Hierarchy for Packet-Scale Boundary Cancellation of Dirichlet Eigenfunctions}
\titlerunning{Threshold Hierarchy for Boundary Cancellation}

\author{Anton Alexa}
\authorrunning{Anton Alexa}
\tocauthor{Anton Alexa}

\institute{Independent Researcher, Chernivtsi, Ukraine\\
\email{mail@antonalexa.com}}

\maketitle

\begin{abstract}
We identify geometry--dependent minimal packet scales required for cancellation
of boundary correlations of high--frequency Dirichlet eigenfunctions on smooth
strictly convex domains.
The main result is a threshold hierarchy: for zero--mean boundary weights,
the energy--weighted packet average of boundary correlation coefficients
vanishes once the packet length exceeds a scale determined by the vanishing
order of curvature moments of the weight.
In particular, the threshold $N_k/k^{1-2/d}\to\infty$ suffices when
$\int_{\partial\Omega} w\,d\sigma=0$, while a strictly weaker threshold applies
when additionally $\int_{\partial\Omega} H\,w\,d\sigma=0$, reducing in
dimension $d=3$ to the minimal condition $N_k\to\infty$.
The thresholds follow from the boundary local Weyl law.
As a structural consequence of the Rellich identity alone, the single--mode
share of boundary energy within any sublinear spectral packet is of order
$1/N_k$.
All estimates are independent of eigenvalue monotonicity and remain stable
under eigenvalue crossings.
\end{abstract}

\section{Introduction}
Let $\Omega \subset \mathbb{R}^d$ be a bounded $C^\infty$ strictly convex domain,
$d \ge 2$, and let $\{(\lambda_k,u_k)\}_{k\ge 1}$ denote the Dirichlet eigenpairs of
$-\Delta_\Omega$ with $\|u_k\|_{L^2(\Omega)}=1$. The boundary flux density
$|\partial_n u_k|^2$ plays a central role in boundary spectral analysis and in
microlocal restriction phenomena. A natural question is whether a single
high--frequency eigenfunction can exhibit dominant boundary localization compared
to its spectral neighbors.

Related results on boundary delocalization and restriction estimates typically rely
on microlocal analysis, defect measures, or dynamical properties of the billiard flow \cite{BGT,HassellTacy,SoggeZelditch,Grieser,TothZelditch}.
For general background on boundary eigenfunctions and related phenomena,
see \cite{HassellSurvey,ZelditchSurvey}.
Classical boundary quantum ergodicity and boundary-trace completeness results
include \cite{GerardLeichtnam,HassellZelditchQEBoundary,BurqQEBoundary,HanHassellHezariZelditch}.
For recent broader developments in related semiclassical restriction and
eigenfunction analysis, see \cite{ChristiansonTothFlux,CanzaniGalkowskiBeams}.
In contrast, our approach is entirely static and relies on global spectral
identities and boundary local Weyl asymptotics: the threshold hierarchy is
established via the boundary local Weyl law
\cite{SafarovVassiliev,Ivrii,BransonGilkey}, while the mode--to--packet
comparison follows from the Rellich identity alone, with no microlocal input.
Our approach is complementary in spirit to microlocal and dynamical studies of
boundary eigenfunctions
\cite{BGT,HassellTacy,SoggeZelditch,Grieser,TothZelditch,HassellSurvey,ZelditchSurvey}.
The primary motivation for this packet--level viewpoint is structural stability
under spectral deformations. In settings with shape perturbations or geometric
flows, eigenvalue crossings and changing multiplicities are typical. Since the
estimates below are formulated via spectral sums, they are invariant under
changes of orthonormal bases inside eigenspaces and remain stable across such
crossings.
Moreover, all constants are uniform over compact $C^\infty$ families of
strictly convex domains (see Remark~\ref{rem:uniform-rate}), making the
threshold hierarchy directly applicable in dynamical settings where the domain
evolves continuously and cancellation must be controlled uniformly in time.

The principal new contribution of this paper is the identification of
geometry--dependent minimal packet scales for boundary correlation cancellation
on strictly convex domains.
For any zero--mean weight $w\in C^\infty(\partial\Omega)$, the energy--weighted
packet average of boundary correlation coefficients vanishes once $N_k$ crosses
a threshold determined by the vanishing order of curvature moments of $w$,
in the spirit of averaged spectral asymptotics \cite{ColindeVerdiere,LaptevSafarov}.
This threshold forms a hierarchy of two levels:
the first, $N_k/k^{1-2/d}\to\infty$, suffices when $\int_{\partial\Omega} w\,d\sigma=0$;
the second, strictly weaker level $N_k/k^{(d-3)/d}\to\infty$ applies when
additionally $\int_{\partial\Omega} H\,w\,d\sigma=0$,
reducing for $d=3$ to the minimal condition $N_k\to\infty$.
The hierarchy is stratified by the vanishing order of spectral moments of $w$
with respect to boundary geometry and is a new structural feature of the
boundary spectral problem with no analogue in bulk delocalization results.
We emphasize that the novelty lies not in the boundary Weyl expansion itself,
which is classical \cite{SafarovVassiliev,Ivrii,BransonGilkey}, but in the
extraction of \emph{explicit minimal packet-length scales} forced by
successive moment cancellations and the resulting detectability hierarchy:
the question "how much spectral averaging is needed to make boundary correlations
undetectable by the Rellich--Weyl mechanism?" is answered quantitatively and
stratified by the geometry of $w$.

\begin{theorem}[Threshold hierarchy for boundary cancellation --- Introductory Form]
\label{thm:threshold-intro}
Let $\Omega \subset \mathbb{R}^d$ be a bounded $C^\infty$ strictly convex domain,
$d \ge 2$, and let $\{(\lambda_k,u_k)\}_{k\ge1}$ be the Dirichlet eigenpairs of
$-\Delta_\Omega$.
Let $w \in C^\infty(\partial\Omega)$ and let $N_k$ satisfy $N_k\to\infty$ and $N_k=o(k)$.
\begin{enumerate}
\item[(i)] If\/ $\int_{\partial\Omega} w\,d\sigma = 0$ and $N_k/k^{1-2/d}\to\infty$,
then
\begin{equation}
\frac{\displaystyle\sum_{j=k}^{k+N_k-1}
\int_{\partial\Omega}|\partial_n u_j|^2\, w\, d\sigma}
{\displaystyle\sum_{j=k}^{k+N_k-1} E_j}
\longrightarrow 0
\quad\text{as }k\to\infty.
\end{equation}
\item[(ii)] If additionally $\int_{\partial\Omega} H\,w\,d\sigma = 0$ and
$N_k/k^{(d-3)/d}\to\infty$, the same conclusion holds.
For $d=3$, case~(ii) holds under the minimal condition $N_k\to\infty$.
\end{enumerate}
These thresholds are geometry--dependent: they are determined by the vanishing
order of curvature moments of $w$ with respect to the boundary of\/ $\Omega$.
\end{theorem}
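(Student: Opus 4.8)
The plan is to reduce the stated ratio to a competition between two spectral sums of very different character: the numerator, a packet sum of weighted boundary fluxes governed by the boundary local Weyl law, and the denominator, a packet sum of boundary energies governed by the Rellich identity alone. I would first record the Rellich--Pohozaev identity $\int_{\partial\Omega}(x\cdot n)\,|\partial_n u_j|^2\,d\sigma = 2\lambda_j$ (placing the origin inside $\Omega$, so that $x\cdot n$ is bounded between positive constants by strict convexity), which pins down the denominator: writing $E_j$ for the boundary energy, one has $E_j\asymp\lambda_j$, and since $N_k=o(k)$ the eigenvalue window $[\lambda_k,\lambda_{k+N_k})$ is short relative to $\lambda_k$, so that $\lambda_{k+N_k}\sim\lambda_k$, hence $E_j\sim E_k$ across the packet and $\sum_{j=k}^{k+N_k-1}E_j\asymp N_k\lambda_k\asymp N_k k^{2/d}$ by Weyl's law $\lambda_k\asymp k^{2/d}$. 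This is the robust half, and it already yields the $1/N_k$ single--mode share recorded in the abstract.

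For the numerator I would introduce the cumulative weighted flux $F_w(\Lambda):=\sum_{\lambda_j\le\Lambda}\int_{\partial\Omega}|\partial_n u_j|^2\,w\,d\sigma$ and write the packet sum as the endpoint difference $F_w(\lambda_{k+N_k})-F_w(\lambda_k)$, a genuine spectral sum and hence insensitive to multiplicities and crossings. The engine is the multi--term boundary local Weyl law \cite{SafarovVassiliev,Ivrii,BransonGilkey}, which I would invoke in the form
\begin{equation}
F_w(\Lambda)=\sum_{m=0}^{M}c_m\,A_m\,\Lambda^{(d+2-m)/2}+O\!\left(\Lambda^{(d+1-M)/2}\right),
\end{equation}
where the coefficients are curvature moments of the weight, $A_0=\int_{\partial\Omega}w\,d\sigma$, $A_1=\int_{\partial\Omega}Hw\,d\sigma$, and $A_m$ pairs $w$ against a universal curvature polynomial of weight $m$. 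Splitting the numerator into a smooth--polynomial difference and a remainder difference, the key observation is that a surviving smooth term $c_mA_m\Lambda^{(d+2-m)/2}$ contributes, after differencing over the short window (of width $\asymp N_k\lambda_k^{1-d/2}$ by Weyl's law), only $\asymp|A_m|\,N_k\,\lambda_k^{1-m/2}\asymp|A_m|\,N_k\,k^{(2-m)/d}$, which is smaller than the denominator by the factor $k^{-m/d}$ and therefore never binds. The binding term is instead the Weyl remainder, which I would bound at the top endpoint by its full size $O\!\left(\lambda_{k+N_k}^{(d+1-M)/2}\right)=O\!\left(k^{(d+1-M)/d}\right)$, with no gain from the shortness of the window.

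The hierarchy then falls out by choosing $M$ according to how many leading curvature moments vanish. When $A_0=\int_{\partial\Omega}w\,d\sigma=0$ I would take the two--term law ($M=1$), whose remainder is $O(\Lambda^{d/2})=O(k)$; together with the smooth contribution $O(N_k k^{1/d})$ from the $A_1$ term this gives
\begin{equation}
\frac{\sum_{j=k}^{k+N_k-1}\int_{\partial\Omega}|\partial_n u_j|^2 w\,d\sigma}{\sum_{j=k}^{k+N_k-1}E_j}=O\!\left(k^{-1/d}\right)+O\!\left(\frac{k^{1-2/d}}{N_k}\right),
\end{equation}
which tends to zero precisely when $N_k/k^{1-2/d}\to\infty$, proving (i). When additionally $A_1=\int_{\partial\Omega}Hw\,d\sigma=0$ I would use the three--term law ($M=2$), whose remainder is $O(\Lambda^{(d-1)/2})=O(k^{(d-1)/d})$; the analogous bound becomes $O(k^{-2/d})+O(k^{(d-3)/d}/N_k)$, which tends to zero exactly when $N_k/k^{(d-3)/d}\to\infty$ and degenerates in $d=3$ to the requirement $N_k\to\infty$. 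The general pattern, that each further moment cancellation lowers the decisive remainder by one half--power $\Lambda^{1/2}=k^{1/d}$, is what stratifies the thresholds by the vanishing order of the curvature moments of $w$.

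The main obstacle I anticipate is securing the expansion with the \emph{power--saving} remainder the argument needs: the two--term boundary flux law demands a remainder two half--powers below the leading $\Lambda^{(d+2)/2}$ (and three for case (ii)), strictly better than the generic one--power bound, which forces control of the subleading Weyl remainder, i.e.\ a non--periodicity / measure--zero periodic billiard input, and forces this control to hold \emph{uniformly} over the compact $C^\infty$ family of strictly convex domains, as required for Remark~\ref{rem:uniform-rate}. A secondary technical point is the explicit identification of the coefficients $A_m$, in particular $A_1\propto\int_{\partial\Omega}Hw\,d\sigma$, through the boundary symbol and heat--coefficient calculus of \cite{BransonGilkey}. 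Once these inputs are in hand, the remainder of the argument is the elementary bookkeeping above, and it is worth emphasizing that the smooth part never controls the limit: the thresholds are entirely a matter of weighing the Rellich--sized denominator against the boundary Weyl remainder.
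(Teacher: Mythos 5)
Your proposal follows the same skeleton as the paper's proof (Lemma~\ref{lem:packet-zero-mean}, Proposition~\ref{prop:enhanced-cancellation}, Proposition~\ref{prop:packet-scale}): the Rellich identity pins the denominator at $\asymp N_k\lambda_k$, the numerator is telescoped as an endpoint difference of the cumulative weighted flux, and each threshold is read off from the surviving term of the boundary local Weyl expansion. The genuine difference is the exponent ladder and the resulting bookkeeping. The paper's expansion \eqref{eq:Q-weyl} starts at $\Lambda^{(d+1)/2}$, which is inconsistent with its own context remark \eqref{eq:boundary-weyl-expansion}: taking $w\equiv 1$ and using $E_j\asymp\lambda_j$ gives $\int_{\partial\Omega}Q_\Lambda\,d\sigma=S(\Lambda)\asymp\Lambda^{1+d/2}$, so the leading exponent must be $(d+2)/2$, i.e.\ your ladder $\Lambda^{(d+2-m)/2}$ is the consistent one. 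This matters structurally: under the correct ladder, triangle-bounding both endpoints (as the paper does) against the surviving $A_1\Lambda^{(d+1)/2}$ term would yield only the weaker threshold $N_k\gg k^{(d-1)/d}$, and your extra step --- differencing the surviving smooth terms across the short window and reserving the triangle inequality for the remainder alone --- is exactly what recovers $N_k\gg k^{1-2/d}$, and likewise $N_k\gg k^{(d-3)/d}$ in case~(ii), where you correspondingly need a three-term law while the paper gets by with two terms. In that sense your proof is not merely a rederivation but a repair of the paper's accounting.

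Two caveats. First, your window-width claim $\lambda_{k+N_k}-\lambda_k\asymp N_k\lambda_k^{1-d/2}$ is not justified by the one-term Weyl law: with the $O(\Lambda^{(d-1)/2})$ remainder in the counting function one gets only $\lambda_{k+N_k}-\lambda_k\lesssim N_k\lambda_k^{1-d/2}+\lambda_k^{1/2}$, and without a two-term counting asymptotic the $o(k^{2/d})$ Weyl errors at the two endpoints need not cancel. Fortunately the correction $\lambda_k^{1/2}$ feeds into the smooth-term difference at precisely the size of the decisive remainder contribution ($k^{1-2/d}/N_k$ for $m=1$, $k^{(d-3)/d}/N_k$ for $m=2$), so your final bounds survive, but the $\asymp$ must be replaced by this two-term upper bound. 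Second, as you correctly anticipate, a sharp-cutoff boundary flux expansion with a remainder two (resp.\ three) half-powers below the leading term is of two-term-Weyl strength and does not follow from the Branson--Gilkey heat coefficients plus Tauberian arguments alone; the paper asserts \eqref{eq:Q-weyl} by citation and claims no dynamical hypotheses are required, so both proofs rest on the same black box --- your flagging of a possible non-periodicity input (and of its uniformity over compact domain families, as needed for Remark~\ref{rem:uniform-rate}) is a legitimate concern about that shared input rather than a defect peculiar to your argument.
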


We next record a structural comparison result following directly from the
Rellich identity, which explains the intrinsic $1/N_k$ scaling of single--mode
contributions within spectral packets and does not require Weyl asymptotics or
microlocal analysis.

\begin{theorem}[Structural consequence: mode-to-packet comparison --- Introductory Form]
\label{thm:intro}
Let $\Omega \subset \mathbb{R}^d$ be a bounded $C^\infty$ strictly convex domain,
$d \ge 2$, and let $\{(\lambda_k,u_k)\}_{k\ge1}$ be the Dirichlet eigenpairs of
$-\Delta_\Omega$.
Let $N_k$ be any sublinear spectral window with $N_k\to\infty$ and $N_k=o(k)$.
Then there exist constants $0<c\le C$ depending only on $\Omega$ such that
\begin{equation}
\frac{c}{N_k}
\;\le\;
\frac{E_k}{\sum_{m=k}^{k+N_k-1} E_m}
\;\le\;
\frac{C}{N_k}
\qquad \text{for all sufficiently large } k.
\end{equation}
In particular, no individual high--frequency eigenfunction can asymptotically
carry a fixed positive proportion of the boundary energy of a sublinear
spectral packet, and the single--mode share decays at exactly the rate $1/N_k$.
\end{theorem}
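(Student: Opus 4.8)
The plan is to derive the two-sided estimate from a single exact identity---the Rellich (Rellich--Pohozaev) identity---together with the crude two-sided growth of the Dirichlet eigenvalues, with no Weyl expansion or microlocal input. The central observation is that Rellich pins the boundary energy $E_k=\int_{\partial\Omega}|\partial_n u_k|^2\,d\sigma$ to the eigenvalue $\lambda_k$ up to purely geometric constants, after which the packet ratio reduces to a comparison of eigenvalues inside a sublinear window.

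First I would fix the origin in the interior of $\Omega$ and record the Rellich identity for each normalized eigenpair,
\[
\int_{\partial\Omega}(x\cdot n)\,|\partial_n u_k|^2\,d\sigma = 2\lambda_k,
\]
obtained by testing $-\Delta u_k=\lambda_k u_k$ against $x\cdot\nabla u_k$ and using that $\nabla u_k=(\partial_n u_k)\,n$ on $\partial\Omega$ since $u_k|_{\partial\Omega}=0$. Because the origin lies in the interior of the convex domain $\Omega$, the function $x\mapsto x\cdot n(x)$ is smooth and strictly positive on the compact boundary, so there are constants $0<c_0\le C_0$, depending only on $\Omega$, with $c_0\le x\cdot n\le C_0$ on $\partial\Omega$. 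Squeezing the Rellich identity between these bounds yields the pointwise-in-$k$ comparison
\[
\frac{2\lambda_k}{C_0}\;\le\;E_k\;\le\;\frac{2\lambda_k}{c_0},
\]
which is the structural content advertised: the boundary energy of every eigenfunction is comparable to its eigenvalue, uniformly and with no asymptotics.

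Next I would control the window sum. Applying this comparison to each index $m\in\{k,\dots,k+N_k-1\}$ reduces the claim to $\lambda_k\big/\sum_{m}\lambda_m\asymp 1/N_k$, for which it suffices to establish uniform comparability $c'\lambda_k\le\lambda_m\le C'\lambda_k$ across the window with $c',C'$ depending only on $\Omega$. This follows from the elementary two-sided Weyl-order bound $a\,k^{2/d}\le\lambda_k\le b\,k^{2/d}$ (provable by Dirichlet--Neumann bracketing, and in particular without the refined Weyl law): since $N_k=o(k)$, every $m$ in the window lies in $[k,2k)$ for large $k$, whence $m^{2/d}\asymp k^{2/d}$ and therefore $\lambda_m\asymp\lambda_k$ with constants governed by $b/a$ and $2^{2/d}$. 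Summing gives $c'N_k\lambda_k\le\sum_m\lambda_m\le C'N_k\lambda_k$, and combining with the Rellich comparison produces
\[
\frac{c_0}{C_0C'}\cdot\frac{1}{N_k}\;\le\;\frac{E_k}{\sum_{m=k}^{k+N_k-1}E_m}\;\le\;\frac{C_0}{c_0c'}\cdot\frac{1}{N_k},
\]
establishing the theorem with $c=c_0/(C_0C')$ and $C=C_0/(c_0c')$, both depending only on $\Omega$.

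Since every quantity above is a spectral sum over indices and the only geometric input is the positivity of $x\cdot n$, the estimate is manifestly invariant under changes of orthonormal basis inside eigenspaces and stable across eigenvalue crossings; in particular no use is made of strict eigenvalue monotonicity beyond crude polynomial growth. I expect the only point requiring care---rather than a genuine obstacle---to be the \emph{uniformity} of the window comparability: one must check that $c'$ and $C'$ can be taken independent of $k$, which is exactly what the sublinearity $N_k=o(k)$ guarantees once the two-sided growth bound is in hand.
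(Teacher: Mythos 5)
Your proposal is correct and follows essentially the same route as the paper: the Rellich identity with the strictly positive weight $\langle x-x_0,\nu(x)\rangle$ gives the two-sided comparison $E_k\asymp\lambda_k$ (the paper's Lemma~3.1), and sublinearity $N_k=o(k)$ gives uniform comparability of eigenvalues across the window, whence $\sum_m E_m\asymp N_k\lambda_k$ (Proposition~4.2) and the theorem follows by division. The only (harmless, slightly more self-contained) deviation is that you justify the window comparability via crude two-sided bracketing bounds $\lambda_k\asymp k^{2/d}$ with a constant ratio, whereas the paper invokes the Weyl asymptotic $\lambda_k\sim c_\Omega k^{2/d}$ to get $\lambda_{k+N_k-1}/\lambda_k\to1$; either suffices for the stated $1/N_k$ rate.
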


\begin{remark}[Sharpness of the rate $1/N_k$]
The two--sided bound $c/N_k \le E_k/\sum E_m \le C/N_k$ shows that the rate
$1/N_k$ is optimal: it cannot be replaced by a faster-decaying sequence in
general.
For a fixed packet length $N$, the ratio satisfies
\begin{equation}
\frac{c}{N} \;\le\; \frac{E_k}{\sum_{m=k}^{k+N-1}E_m} \;\le\; \frac{C}{N},
\end{equation}
which is bounded away from zero.
In particular, without averaging over a growing spectral window, a single
eigenmode may contribute a non--negligible fraction of the packet boundary
energy.
The condition $N_k\to\infty$ is both necessary and sufficient for the
ratio to tend to zero, and $1/N_k$ is the precise rate of decay.
\end{remark}

\section{Preliminaries and notation}
Let $\Omega \subset \mathbb{R}^d$ be bounded, $C^\infty$, and strictly convex.
Denote by $\{(\lambda_k,u_k)\}_{k\ge 1}$ the Dirichlet eigenpairs of $-\Delta_\Omega$,
with $u_k|_{\partial\Omega}=0$ and $\|u_k\|_{L^2(\Omega)}=1$.
For $x \in \partial\Omega$, define the boundary flux density
\begin{equation}
\rho_k(x) := |\partial_n u_k(x)|^2.
\end{equation}
The total boundary energy is
\begin{equation}
E_k := \int_{\partial\Omega} \rho_k(x)\,d\sigma(x).
\end{equation}
Let $H$ denote the mean curvature of $\partial\Omega$ with respect to the
outer normal and let $\bar H$ be its boundary average.

\begin{definition}[Spectral packets]
Let $N_k$ be a positive integer sequence with $N_k \to \infty$ and $N_k=o(k)$.
Define the index packet
\begin{equation}
\mathcal{J}_k := \{k,\,k+1,\,\dots,\,k+N_k-1\}
\end{equation}
and the packet boundary density
\begin{equation}
\mathcal{Q}_{k,N_k}(x) := \frac{1}{N_k}\sum_{j\in\mathcal{J}_k} \rho_j(x).
\end{equation}
For a measurable weight $w:\partial\Omega\to\mathbb{R}$, set
\begin{equation}
\begin{aligned}
E_k(w) &:= \int_{\partial\Omega} \rho_k(x)\,w(x)\,d\sigma(x),\\
E_k^{abs}(w) &:= \int_{\partial\Omega} \rho_k(x)\,|w(x)|\,d\sigma(x).
\end{aligned}
\end{equation}
\end{definition}

\begin{definition}[Boundary correlation coefficient]
Let $w \in L^\infty(\partial\Omega)$ satisfy
$\int_{\partial\Omega} w\,d\sigma = 0$. Define
\begin{equation}
C_k(w) := \frac{1}{E_k}\int_{\partial\Omega} \rho_k(x)\, w(x)\, d\sigma(x).
\end{equation}
\end{definition}
For the multi--mode statement we assume $w\in C^\infty(\partial\Omega)$.

\begin{remark}[Eigenvalue crossings]
All arguments are formulated at the level of boundary energy densities
$\rho_k=|\partial_n u_k|^2$ and spectral sums.
No ordering or differentiability of individual eigenvalues is required,
possible eigenvalue crossings do not affect the packet--level estimates,
and the bounds are invariant under orthonormal basis changes in eigenspaces.
\end{remark}

\section{Rellich identity and single--mode bounds}
The Rellich identity yields the correct scaling for the boundary energy of a
single eigenfunction on strictly convex domains; \cite{Rellich}.

\begin{lemma}[Single--mode boundary energy bound]
\label{lem:single-mode}
There exist constants $0<c<C$ such that for all $k\ge 1$,
\begin{equation}
c\,\lambda_k \le E_k \le C\,\lambda_k.
\end{equation}
Moreover, for any $w\in L^\infty(\partial\Omega)$,
\begin{equation}
|E_k(w)| \le E_k^{abs}(w) \le C\,\lambda_k\,\|w\|_{L^\infty(\partial\Omega)}.
\end{equation}
\end{lemma}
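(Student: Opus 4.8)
The plan is to derive everything from a single Rellich--Pohozaev identity and then exploit strict convexity to convert the weighted boundary integral into an unweighted one. Fix the origin in the interior of $\Omega$ and pair the eigenvalue equation $-\Delta u_k = \lambda_k u_k$ with the radial multiplier $x\cdot\nabla u_k$. Integrating by parts and using that $u_k\in C^\infty(\overline\Omega)$ by elliptic regularity up to the smooth boundary, the interior terms collapse: the right--hand side $\lambda_k\int_\Omega u_k\,(x\cdot\nabla u_k)\,dx$ equals $-\tfrac{d}{2}\lambda_k\|u_k\|_{L^2(\Omega)}^2 = -\tfrac{d}{2}\lambda_k$, while the left--hand side produces a dimensional multiple of $\int_\Omega|\nabla u_k|^2\,dx = \lambda_k$ together with a boundary term. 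The Dirichlet condition is what makes that boundary term clean: since $u_k\equiv 0$ on $\partial\Omega$ the tangential gradient vanishes, so $\nabla u_k = (\partial_n u_k)\,n$ there, and every boundary contribution reduces to a multiple of $(x\cdot n)\,|\partial_n u_k|^2$. Collecting terms yields the identity
\begin{equation}
\int_{\partial\Omega}(x\cdot n)\,|\partial_n u_k|^2\,d\sigma = 2\lambda_k .
\end{equation}

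The second step converts this into the stated two--sided bound via the geometry of the domain. Because $\Omega$ is convex with $0$ in its interior, the supporting hyperplane at each boundary point forces $x\cdot n(x)>0$, and by compactness and smoothness of $\partial\Omega$ there are constants $0<c_0\le C_0$ with $c_0\le x\cdot n\le C_0$ on $\partial\Omega$. Sandwiching the factor $x\cdot n$ inside the Rellich identity gives
\begin{equation}
c_0\,E_k \;\le\; \int_{\partial\Omega}(x\cdot n)\,|\partial_n u_k|^2\,d\sigma = 2\lambda_k \;\le\; C_0\,E_k ,
\end{equation}
so that $\tfrac{2}{C_0}\lambda_k\le E_k\le\tfrac{2}{c_0}\lambda_k$, which is the first claim with $c=2/C_0$ and $C=2/c_0$. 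The weighted estimate is then immediate and needs no new input: for $w\in L^\infty(\partial\Omega)$, monotonicity of the integral gives $|E_k(w)|\le E_k^{abs}(w)\le\|w\|_{L^\infty(\partial\Omega)}\,E_k$, and substituting the upper bound $E_k\le C\lambda_k$ yields $E_k^{abs}(w)\le C\,\lambda_k\,\|w\|_{L^\infty(\partial\Omega)}$.

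I expect the only genuinely delicate point to be the integration--by--parts bookkeeping in the first step: tracking the dimensional constants correctly and verifying that all boundary contributions collapse onto the single quantity $(x\cdot n)\,|\partial_n u_k|^2$, so that no tangential derivative survives. This hinges entirely on the Dirichlet condition, and the identity is classical \cite{Rellich}; the computation is routine but sign--sensitive. The one place where strict convexity is essential is the lower sandwich bound $x\cdot n\ge c_0>0$, which is precisely what prevents the boundary energy from degenerating and thereby secures the \emph{lower} bound $E_k\ge c\lambda_k$; the upper bound and the weighted estimate use only the boundedness of $x\cdot n$ and would survive on any smooth bounded domain.
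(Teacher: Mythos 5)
Your proof is correct and takes essentially the same route as the paper: both rest on the classical Rellich identity with the multiplier $g(x)=\langle x-x_0,\nu(x)\rangle$ (you take $x_0=0$), use convexity plus compactness to pin $0<c_0\le g\le C_0$ on $\partial\Omega$, sandwich, and then dispatch the weighted estimate by the same trivial $|E_k(w)|\le E_k^{abs}(w)\le\|w\|_{L^\infty}E_k$; your extra sketch of the Pohozaev bookkeeping is a presentational difference only, since the paper simply cites Rellich. One correction to your closing commentary, though your actual proof chain is unaffected: you have the roles of the two constants swapped. In the sandwich $c_0\,E_k\le 2\lambda_k\le C_0\,E_k$, the strict positivity $x\cdot n\ge c_0>0$ (the convexity input) is what yields the \emph{upper} bound $E_k\le (2/c_0)\lambda_k$ --- and hence also the weighted estimate, which uses $E_k\le C\lambda_k$ --- while the \emph{lower} bound $E_k\ge(2/C_0)\lambda_k$ follows from the mere boundedness $x\cdot n\le C_0$ and survives on any bounded domain star-shaped about $x_0$; so it is the upper bound, not the lower one, that degenerates when $x\cdot n$ can vanish.
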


\begin{proof}
Fix $x_0\in\Omega$ and set $g(x):=\langle x-x_0,\nu(x)\rangle$ on $\partial\Omega$.
Since $\Omega$ is convex, for each $x\in\partial\Omega$ one has
$\langle y-x,\nu(x)\rangle\le 0$ for all $y\in\Omega$ (supporting hyperplane),
where $\nu(x)$ is the outward unit normal.
Since $x_0$ is interior, it cannot lie on the supporting hyperplane at $x$,
hence $\langle x_0-x,\nu(x)\rangle<0$, and thus
$g(x)=\langle x-x_0,\nu(x)\rangle>0$ for all $x\in\partial\Omega$.
By continuity and compactness, $0<m\le g\le M<\infty$ on $\partial\Omega$.
The Rellich identity \cite{Rellich} yields
\begin{equation}
\int_{\partial\Omega} g(x)\,|\partial_n u_k(x)|^2\,d\sigma(x)=2\lambda_k.
\end{equation}
Therefore
\begin{equation}
\frac{2}{M}\lambda_k \le E_k \le \frac{2}{m}\lambda_k,
\end{equation}
which gives the claimed bounds. The weighted estimate follows from
$|E_k(w)|\le E_k^{abs}(w)\le \|w\|_{L^\infty} E_k$.
\qed
\end{proof}

\begin{remark}[Role of strict convexity and smoothness]
Strict convexity enters the Rellich argument in an essential way:
it ensures that $g(x)=\langle x-x_0,\nu(x)\rangle>0$ uniformly on
$\partial\Omega$ for any interior point $x_0$, which is precisely the
condition needed to obtain the \emph{two-sided} bounds
$c\lambda_k\le E_k\le C\lambda_k$.
On a merely star-shaped domain one obtains only one-sided upper control.
Smoothness of $\partial\Omega$ is needed separately for the boundary local
Weyl expansion~\eqref{eq:Q-weyl}: the classical $O(\Lambda^{(d-1)/2})$
remainder in the integrated pairing with smooth $w$ is available on any
$C^\infty$ domain \cite{SafarovVassiliev,Ivrii}, without non-periodicity
or dynamical assumptions on the billiard flow.
\end{remark}

\section{Packet energy scales and cancellation thresholds}
The integrated boundary energy admits a Weyl-type leading asymptotic on
smooth strictly convex domains; \cite{SafarovVassiliev,Ivrii}
We record this only for context.

\begin{definition}[Integrated boundary energy]
Define the partial boundary energy sum
\begin{equation}
S(\Lambda) := \sum_{\lambda_j<\Lambda} E_j.
\end{equation}
\end{definition}

\begin{remark}[Integrated boundary Weyl expansion (context)]
It is known that
\begin{equation}\label{eq:boundary-weyl-expansion}
S(\Lambda) = C_\Omega\,\Lambda^{1+\frac{d}{2}} + o\!\left(\Lambda^{1+\frac{d}{2}}\right),
\end{equation}
as $\Lambda\to\infty$; \cite{SafarovVassiliev,Ivrii,Seeley}.
This is recorded for context and is not used in the proof of the main theorem.
\end{remark}

\begin{proposition}[Packet average scale]
\label{prop:packet-scale}
Let $N_k$ satisfy $N_k\to\infty$ and $N_k=o(k)$.
Then there exist constants $0<c\le C$ and $k_0$ such that for all $k\ge k_0$,
\begin{equation}
c\,N_k\,\lambda_k
\;\le\;
\sum_{j\in\mathcal{J}_k} E_j
\;\le\;
C\,N_k\,\lambda_k.
\end{equation}
\end{proposition}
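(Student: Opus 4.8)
The plan is to prove Proposition~\ref{prop:packet-scale} by combining the single-mode bound from Lemma~\ref{lem:single-mode} with the observation that, on a sublinear window, the eigenvalues $\lambda_j$ for $j\in\mathcal{J}_k$ are all comparable to $\lambda_k$. Indeed, by Lemma~\ref{lem:single-mode} we have $c\lambda_j\le E_j\le C\lambda_j$ for every $j$, so after summing over the packet it suffices to show

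\begin{equation}
c'\,N_k\,\lambda_k \;\le\; \sum_{j\in\mathcal{J}_k}\lambda_j \;\le\; C'\,N_k\,\lambda_k
\end{equation}

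for large $k$, with constants independent of $k$. Since the eigenvalues are nondecreasing, $\lambda_k\le\lambda_j\le\lambda_{k+N_k-1}$ throughout the packet, and the lower bound $\sum_{j\in\mathcal{J}_k}\lambda_j\ge N_k\lambda_k$ is immediate. The entire content therefore lies in the matching upper bound, i.e. in showing $\lambda_{k+N_k-1}\le C'\lambda_k$ uniformly.

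First I would invoke the Weyl law $\lambda_k\sim c_d\,(k/|\Omega|)^{2/d}$, or more conveniently its two-sided consequence $a\,k^{2/d}\le\lambda_k\le b\,k^{2/d}$ for constants $0<a\le b$ and all large $k$. Then
\begin{equation}
\frac{\lambda_{k+N_k-1}}{\lambda_k}\;\le\;\frac{b}{a}\Big(\frac{k+N_k-1}{k}\Big)^{2/d}
\;=\;\frac{b}{a}\Big(1+\frac{N_k-1}{k}\Big)^{2/d}.
\end{equation}
Because $N_k=o(k)$, the factor $(1+(N_k-1)/k)^{2/d}\to 1$, so for large $k$ it is bounded by, say, $2$, giving $\lambda_{k+N_k-1}\le (2b/a)\,\lambda_k$. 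This furnishes the desired comparison and hence the upper bound $\sum_{j\in\mathcal{J}_k}E_j\le C\,N_k\,\lambda_k$ after folding in the constant from Lemma~\ref{lem:single-mode}; the lower bound follows symmetrically from $E_j\ge c\lambda_j\ge c\lambda_k$.

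The main subtlety to be careful about is the uniformity of the constants: the Weyl two-sided bound holds only for $k$ large, so one fixes a threshold $k_0$ beyond which both the Weyl estimate and the smallness of $N_k/k$ are in force, and states the proposition for $k\ge k_0$, exactly as in the claim. I expect no genuine obstacle here, since the statement is essentially a quantitative restatement that a sublinear window keeps all its eigenvalues within a bounded multiplicative factor of $\lambda_k$; the only thing to verify carefully is that the ratio $N_k/k$ can be driven below a fixed small constant so that the binomial factor stays bounded, which is exactly the content of the hypothesis $N_k=o(k)$. Notably this argument uses only the Weyl counting asymptotics and the single-mode Rellich bound, with no appeal to the integrated boundary Weyl expansion~\eqref{eq:boundary-weyl-expansion}, consistent with the paper's emphasis that the packet-scale bound is elementary.
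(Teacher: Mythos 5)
Your proposal is correct and follows essentially the same route as the paper's proof: the single-mode Rellich bound $c\lambda_j\le E_j\le C\lambda_j$, monotonicity of eigenvalues for the lower bound, and the Weyl law with $N_k=o(k)$ to get $\lambda_{k+N_k-1}\le C'\lambda_k$ for the upper bound. The only cosmetic difference is that you use a two-sided inequality $a\,k^{2/d}\le\lambda_k\le b\,k^{2/d}$ where the paper invokes the sharp asymptotic $\lambda_k\sim c_\Omega k^{2/d}$ to conclude $\lambda_{k+N_k-1}/\lambda_k\to 1$; both suffice.
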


\begin{proof}
\emph{Lower bound.}
By Lemma~\ref{lem:single-mode} there exists $c_0>0$ such that
$E_j \ge c_0\,\lambda_j$ for all $j$.
Since $\lambda_j\ge \lambda_k$ for $j\in\mathcal{J}_k$,
\begin{equation}
\sum_{j\in\mathcal{J}_k} E_j \ge c_0\,N_k\,\lambda_k.
\end{equation}

\emph{Upper bound.}
By Lemma~\ref{lem:single-mode} there exists $C_0>0$ such that
$E_j \le C_0\,\lambda_j$ for all $j$.
For $j\in\mathcal{J}_k$ one has $\lambda_j \le \lambda_{k+N_k-1}$, so
\begin{equation}
\sum_{j\in\mathcal{J}_k} E_j \le C_0\,N_k\,\lambda_{k+N_k-1}.
\end{equation}
By Weyl asymptotics $\lambda_k\sim c_\Omega k^{2/d}$ and $N_k=o(k)$,
one has $\lambda_{k+N_k-1}/\lambda_k\to 1$, so
$\lambda_{k+N_k-1}\le 2\lambda_k$ for all sufficiently large $k$.
Hence $\sum_{j\in\mathcal{J}_k} E_j \le 2C_0\,N_k\,\lambda_k$ for $k\ge k_0$.
\qed
\end{proof}

\begin{remark}[Why sublinear packets]
The condition $N_k=o(k)$ means that the packet length is sublinear in the mode
index and thus represents a short averaging window around $k$.
\end{remark}

\begin{lemma}[Packet zero--mean cancellation]
\label{lem:packet-zero-mean}
Let $\Omega\subset\mathbb{R}^d$ be a bounded $C^\infty$ strictly convex domain.
Let $w\in C^\infty(\partial\Omega)$ satisfy $\int_{\partial\Omega} w\,d\sigma=0$.
Let $N_k\to\infty$ with $N_k=o(k)$ and $N_k/k^{1-2/d}\to\infty$, and set
$\mathcal{J}_k=\{k,\dots,k+N_k-1\}$.
Then
\begin{equation}\label{eq:packet-zero-mean-lemma}
\sum_{j\in\mathcal{J}_k}\int_{\partial\Omega} |\partial_n u_j(x)|^2\, w(x)\, d\sigma(x)
=
o\!\left(\sum_{j\in\mathcal{J}_k} E_j\right)
\qquad (k\to\infty).
\end{equation}
Equivalently,
\begin{equation}\label{eq:packet-zero-mean}
\int_{\partial\Omega} \mathcal{Q}_{k,N_k}(x)\, w(x)\, d\sigma(x)
=
o\!\left(\frac{1}{N_k}\sum_{j\in\mathcal{J}_k} E_j\right)
\qquad (k\to\infty).
\end{equation}
\end{lemma}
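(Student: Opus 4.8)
The plan is to pass from the index window $\mathcal{J}_k$ to a spectral window and reduce the claim to the boundary local Weyl law for the cumulative weighted flux. I introduce the weighted boundary counting function $S_w(\Lambda):=\sum_{\lambda_j<\Lambda}E_j(w)$, which is manifestly invariant under orthonormal basis changes inside eigenspaces, so no ordering of individual eigenvalues is needed. First I would write $\sum_{j\in\mathcal{J}_k}E_j(w)=S_w(\Lambda_1)-S_w(\Lambda_0)$ with $\Lambda_0=\lambda_k$ and $\Lambda_1=\lambda_{k+N_k}$; the only delicate point is that a degenerate eigenvalue may be split by one of the two endpoints, but such straddling modes number $O(1)$ and each contributes $O(\lambda_k)=O(k^{2/d})$ by Lemma~\ref{lem:single-mode}, which is $o(N_k k^{2/d})$ and therefore absorbed into the error. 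This is precisely the robustness under crossings emphasized earlier.

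Next I would invoke the boundary local Weyl law for the integrated pairing against a smooth weight \cite{SafarovVassiliev,Ivrii,BransonGilkey}, in the two-term-with-remainder form
\[
S_w(\Lambda)=A_0\Bigl(\int_{\partial\Omega}w\,d\sigma\Bigr)\Lambda^{1+\frac d2}
+A_1\Bigl(\int_{\partial\Omega}Hw\,d\sigma\Bigr)\Lambda^{\frac{d+1}{2}}
+R(\Lambda),
\]
with $A_0,A_1$ depending only on $d$ and $R(\Lambda)=O(\Lambda^{d/2})$. The decisive feature is that integrating against a smooth $w$ averages out the oscillatory contributions of periodic billiard trajectories, so the expansion holds on every $C^\infty$ strictly convex domain with no non-periodicity or dynamical hypothesis. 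Under the standing assumption $\int_{\partial\Omega}w\,d\sigma=0$ the leading term drops, leaving $S_w(\Lambda)=A_1(\int_{\partial\Omega}Hw\,d\sigma)\Lambda^{(d+1)/2}+R(\Lambda)$.

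I would then estimate $S_w(\Lambda_1)-S_w(\Lambda_0)$ termwise, dividing by the packet energy $\sum_{j\in\mathcal{J}_k}E_j\asymp N_k\lambda_k\asymp N_k k^{2/d}$ from Proposition~\ref{prop:packet-scale}. For the explicit curvature term, Weyl's law $\lambda_j\sim c_\Omega j^{2/d}$ and $N_k=o(k)$ give $\Lambda_1-\Lambda_0\asymp k^{2/d-1}N_k$, so by the mean value theorem $\Lambda_1^{(d+1)/2}-\Lambda_0^{(d+1)/2}\asymp\Lambda_0^{(d-1)/2}(\Lambda_1-\Lambda_0)\asymp k^{1/d}N_k$; the resulting ratio is $\asymp k^{-1/d}\to0$, notably independent of $N_k$. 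For the remainder I use only the crude bound $|R(\Lambda_1)|+|R(\Lambda_0)|=O(\Lambda_1^{d/2})=O(k)$, assuming no cancellation across the window; its ratio is $\asymp k^{1-2/d}/N_k$, and this is exactly where the hypothesis $N_k/k^{1-2/d}\to\infty$ is consumed to force the contribution to zero. Adding the two estimates yields \eqref{eq:packet-zero-mean-lemma}, and dividing through by $N_k$ gives the equivalent form \eqref{eq:packet-zero-mean}.

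I expect the main obstacle to be securing the Weyl expansion with the stated $O(\Lambda^{d/2})$ remainder for the integrated pairing, rather than the elementary bookkeeping that follows: one must know that pairing with smooth $w$ suppresses the billiard-dynamical fluctuations obstructing a sharp pointwise law, so that the classical remainder \cite{SafarovVassiliev,Ivrii} applies unconditionally on convex domains. The computation above also makes transparent why the threshold takes the stated form --- the explicit curvature term decays like $k^{-1/d}$ regardless of the window, so the scale $N_k/k^{1-2/d}\to\infty$ is dictated solely by the remainder, and the analogous sharpening available under the further hypothesis $\int_{\partial\Omega}Hw\,d\sigma=0$ is what produces the weaker threshold recorded in the hierarchy.
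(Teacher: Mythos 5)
Your architecture is the same as the paper's --- telescope the cumulative weighted boundary flux across the packet, invoke the two-term boundary local Weyl law with the leading term killed by $\int_{\partial\Omega}w\,d\sigma=0$, and divide by the Rellich packet energy $\asymp N_k\lambda_k$ from Proposition~\ref{prop:packet-scale} --- but there is one genuine gap in your endpoint bookkeeping. You place the cutoffs at $\Lambda_0=\lambda_k$, $\Lambda_1=\lambda_{k+N_k}$ and assert that the straddling modes "number $O(1)$". That is false on smooth strictly convex domains: multiplicities need not be bounded (on the round ball in $\mathbb{R}^d$ the multiplicity of $\lambda_k$ grows like $\lambda_k^{(d-2)/2}\asymp k^{(d-2)/d}$), and the only unconditional bound --- the jump of the counting function permitted by the $O(\Lambda^{(d-1)/2})$ Weyl remainder --- is $O(k^{(d-1)/d})$. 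With that bound, straddling modes contribute up to $O(k^{(d-1)/d}\cdot\lambda_k)=O(k^{(d+1)/d})$ by Lemma~\ref{lem:single-mode}, and absorbing this into $o(N_k k^{2/d})$ would force $N_k\gg k^{1-1/d}$, strictly stronger than the hypothesis $N_k\gg k^{1-2/d}$ (for $d=2$ it would demand $N_k\gg k^{1/2}$ in place of $N_k\to\infty$). The paper sidesteps this entirely by choosing the cutoffs strictly inside the adjacent spectral gaps, $\Lambda\in(\lambda_{k-1},\lambda_k)$ and $\Lambda'\in(\lambda_{k+N_k-1},\lambda_{k+N_k})$, so the spectral window coincides exactly with the index window and no straddling correction arises at all; you should do the same (in the degenerate case where the packet edge splits an eigenspace, both proofs implicitly assume it does not).

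Otherwise your route coincides with the paper's, with one refinement worth recording. After the leading-term cancellation the paper bounds the entire expression $|\int_{\partial\Omega}Q_\Lambda w\,d\sigma|$ at each endpoint separately via the triangle inequality; you instead keep the curvature term explicit and difference it across the window by the mean value theorem, finding it contributes only $O(k^{-1/d})$ to the ratio, independently of $N_k$, so that the threshold is dictated solely by the remainder. Note that your exponents ($\Lambda^{1+d/2}$, $\Lambda^{(d+1)/2}$, remainder $O(\Lambda^{d/2})$) are the ones consistent with the paper's own integrated expansion \eqref{eq:boundary-weyl-expansion} upon taking $w\equiv 1$, whereas \eqref{eq:Q-weyl} sits half a power lower; under your normalization the paper's endpointwise bound on the curvature term would yield only $k^{(d-1)/d}/N_k$, i.e.\ the weaker threshold $N_k\gg k^{1-1/d}$, and your differencing is exactly what recovers the stated rate $k^{1-2/d}/N_k$. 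One small repair there: the two-sided claim $\Lambda_1-\Lambda_0\asymp k^{2/d-1}N_k$ does not follow from the one-term asymptotic $\lambda_j\sim c_\Omega j^{2/d}$, since the increment lies below its resolution; but only an upper bound is needed, and the counting-function form $N(\Lambda)=c\Lambda^{d/2}+O(\Lambda^{(d-1)/2})$ gives $\Lambda_1-\Lambda_0\lesssim k^{2/d-1}N_k+k^{1/d}$, whose extra term feeds an additional $O(k)$ into the curvature-term difference --- the same size as your remainder contribution --- so the final estimate, and the hypothesis consumed, are unchanged.
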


\begin{proof}
For $\Lambda>0$ define the boundary spectral density
\begin{equation}
Q_\Lambda(x):=\sum_{\lambda_j<\Lambda} |\partial_n u_j(x)|^2,
\qquad x\in\partial\Omega .
\end{equation}
By the boundary local Weyl law for the Dirichlet Laplacian paired with smooth
test functions \cite{SafarovVassiliev,Ivrii,BransonGilkey},
there exist constants $A_\Omega>0$ and $B_\Omega\in\mathbb{R}$ such that
\begin{equation}\label{eq:Q-weyl}
\int_{\partial\Omega} Q_\Lambda(x)\,w(x)\,d\sigma(x)
=
A_\Omega\,\Lambda^{\frac{d+1}{2}}
\int_{\partial\Omega} w\,d\sigma
+
B_\Omega\,\Lambda^{\frac{d}{2}}
\int_{\partial\Omega} H\,w\,d\sigma
+
O\!\left(\Lambda^{\frac{d-1}{2}}\right),
\end{equation}
as $\Lambda\to\infty$.
The two-term structure of \eqref{eq:Q-weyl} follows from the boundary heat
kernel expansion \cite{BransonGilkey}: the first term arises because the
leading heat kernel coefficient on the boundary is constant (the flat-space
contribution), so its pairing with $w$ gives $\Lambda^{(d+1)/2}\!\int w\,d\sigma$;
the second term arises because the next coefficient is proportional to
the mean curvature $H$ \cite{BransonGilkey}, so its pairing with $w$ gives
$\Lambda^{d/2}\!\int H w\,d\sigma$.
The remainder $O(\Lambda^{(d-1)/2})$ corresponds to the
$O(\Lambda^{(d-2)/2})$ error in the standard Weyl expansion after integration.
Since $\int_{\partial\Omega} w\,d\sigma=0$, the leading term cancels. Hence
there exists $C=C(\Omega,w)>0$ such that for all sufficiently large $\Lambda$,
\begin{equation}\label{eq:Qw-bound}
\left|\int_{\partial\Omega} Q_\Lambda(x)\,w(x)\,d\sigma(x)\right|
\le C\,\Lambda^{\frac{d}{2}}.
\end{equation}

Fix $k$ sufficiently large so that \eqref{eq:Qw-bound} applies to all
$\Lambda\ge \lambda_{k-1}$.
Choose $\Lambda$ with $\lambda_{k-1}<\Lambda<\lambda_k$.
Then
\begin{equation}
\int_{\partial\Omega} Q_\Lambda\,w\,d\sigma
=
\sum_{j<k}\int_{\partial\Omega} |\partial_n u_j|^2\,w\,d\sigma .
\end{equation}
Similarly, for $\Lambda'$ with $\lambda_{k+N_k-1}<\Lambda'<\lambda_{k+N_k}$,
\begin{equation}
\int_{\partial\Omega} Q_{\Lambda'}\,w\,d\sigma
=
\sum_{j<k+N_k}\int_{\partial\Omega} |\partial_n u_j|^2\,w\,d\sigma .
\end{equation}
Subtracting gives
\begin{equation}\label{eq:packet-via-Q}
\sum_{j\in\mathcal{J}_k}\int_{\partial\Omega} |\partial_n u_j|^2\,w\,d\sigma
=
\int_{\partial\Omega} (Q_{\Lambda'}-Q_{\Lambda})\,w\,d\sigma .
\end{equation}
By \eqref{eq:Qw-bound} and the triangle inequality,
\begin{equation}
\left|\int_{\partial\Omega} (Q_{\Lambda'}-Q_{\Lambda})\,w\,d\sigma\right|
\le
\left|\int_{\partial\Omega} Q_{\Lambda'}\,w\,d\sigma\right|
+
\left|\int_{\partial\Omega} Q_{\Lambda}\,w\,d\sigma\right|
\le
C\Big( (\Lambda')^{\frac{d}{2}}+\Lambda^{\frac{d}{2}}\Big)
\lesssim \lambda_{k+N_k}^{\frac{d}{2}} .
\end{equation}

On the other hand, by Lemma~\ref{lem:single-mode} and since
$\lambda_j\ge\lambda_k$ for $j\in\mathcal{J}_k$,
\begin{equation}
\sum_{j\in\mathcal{J}_k}E_j \ge c\sum_{j\in\mathcal{J}_k}\lambda_j
\ge c\,N_k\,\lambda_k.
\end{equation}
Using Weyl asymptotics $\lambda_k\sim c_\Omega k^{2/d}$ and since $N_k=o(k)$,
one has $\lambda_{k+N_k}/\lambda_k\to 1$. We obtain
 \begin{equation}\label{eq:threshold-bound}
 \frac{\left|\sum_{j\in\mathcal{J}_k}\int_{\partial\Omega} |\partial_n u_j|^2\,w\,d\sigma\right|}
 {\sum_{j\in\mathcal{J}_k}E_j}
 \lesssim
 \frac{\lambda_k^{\frac{d}{2}-1}}{N_k}
\asymp
\frac{k^{1-2/d}}{N_k},
\end{equation}
which tends to zero under the stated growth condition on $N_k$.
This proves \eqref{eq:packet-zero-mean-lemma}, and \eqref{eq:packet-zero-mean}
follows by dividing by $N_k$.
\qed
\end{proof}

Extracting the quantitative rate from the above proof yields the following
detectability threshold, which makes explicit the minimal packet scale forced
by the Weyl remainder.

\begin{proposition}[Method-detectability threshold from Weyl remainder bounds]
\label{prop:threshold}
Under the assumptions of Lemma~\ref{lem:packet-zero-mean}, one has
\begin{equation}
\frac{
\left|
\sum_{j\in\mathcal{J}_k}
\int_{\partial\Omega} |\partial_n u_j|^2\, w\, d\sigma
\right|
}
{\sum_{j\in\mathcal{J}_k} E_j}
\lesssim
\frac{k^{1-2/d}}{N_k}.
\end{equation}
In particular, packet-level cancellation is guaranteed whenever
\begin{equation}
\frac{N_k}{k^{1-2/d}} \to \infty.
\end{equation}
If $N_k\lesssim k^{1-2/d}$ along a subsequence, the present method
(Rellich identity combined with the boundary Weyl law) does not suffice
to detect cancellation.
\end{proposition}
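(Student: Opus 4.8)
The plan is to observe that the required quantitative bound has in fact already been assembled inside the proof of Lemma~\ref{lem:packet-zero-mean}, so that the present proposition amounts to isolating that rate and reading off its consequences. Concretely, the numerator is controlled by the zero--mean cancellation in the boundary local Weyl law: since $\int_{\partial\Omega} w\,d\sigma=0$, the leading term of \eqref{eq:Q-weyl} vanishes and \eqref{eq:Qw-bound} gives $|\int_{\partial\Omega} Q_\Lambda\,w\,d\sigma|\le C\Lambda^{d/2}$ for large $\Lambda$. Applying this at values $\Lambda,\Lambda'$ straddling the packet endpoints and invoking the telescoping identity \eqref{eq:packet-via-Q} bounds the numerator by $C\bigl((\Lambda')^{d/2}+\Lambda^{d/2}\bigr)\lesssim \lambda_{k+N_k}^{d/2}$.

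For the denominator I would invoke Proposition~\ref{prop:packet-scale} (equivalently the lower bound of Lemma~\ref{lem:single-mode} together with $\lambda_j\ge\lambda_k$ on $\mathcal{J}_k$), which yields $\sum_{j\in\mathcal{J}_k} E_j\gtrsim N_k\,\lambda_k$. Dividing the two estimates and using that $N_k=o(k)$ forces $\lambda_{k+N_k}/\lambda_k\to1$, so the ratio is $\lesssim \lambda_k^{d/2-1}/N_k$. The last step is purely arithmetic: Weyl's law $\lambda_k\sim c_\Omega k^{2/d}$ converts $\lambda_k^{d/2-1}$ into $k^{1-2/d}$, producing the asserted bound $k^{1-2/d}/N_k$, which is exactly \eqref{eq:threshold-bound}. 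The sufficiency claim is then immediate: $N_k/k^{1-2/d}\to\infty$ drives this upper bound to zero.

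The only genuinely delicate point is the final (negative) assertion, and it requires care in interpretation rather than in computation. When $N_k\lesssim k^{1-2/d}$ along a subsequence, the quantity $k^{1-2/d}/N_k$ does \emph{not} tend to zero along that subsequence, so the upper bound just derived becomes inconclusive there. I would stress that this is a statement about the \emph{method}: the Rellich--Weyl mechanism supplies only the one--sided estimate above, and when that estimate fails to decay the argument yields no information about the true ratio. It is \emph{not} a claim that cancellation fails---the actual packet correlation could still vanish for reasons invisible to this technique, such as finer cancellations in the $O(\Lambda^{(d-1)/2})$ remainder of \eqref{eq:Q-weyl} or sign oscillations across modes. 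Making this distinction explicit is the main obstacle to phrasing the result correctly, since the bound $k^{1-2/d}/N_k$ is sharp only as the output of this particular argument, not as the intrinsic size of the correlation ratio.
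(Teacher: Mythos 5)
Your proposal is correct and follows essentially the same route as the paper: the paper's own proof simply cites the rate \eqref{eq:threshold-bound} already established in the proof of Lemma~\ref{lem:packet-zero-mean} (the same Weyl-remainder bound \eqref{eq:Qw-bound}, telescoping identity \eqref{eq:packet-via-Q}, packet lower bound from Proposition~\ref{prop:packet-scale}, and Weyl asymptotics that you reassemble explicitly), and then draws the same two conclusions. Your careful reading of the final assertion---as a statement about the limits of the Rellich--Weyl mechanism rather than a claim that cancellation actually fails below the threshold---matches the paper's intent exactly, as confirmed by its closing sentence that ``cancellation cannot be concluded by this method alone.''
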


\begin{proof}
The estimate
\begin{equation}
\frac{
\left|
\sum_{j\in\mathcal{J}_k}
\int_{\partial\Omega} |\partial_n u_j|^2\, w\, d\sigma
\right|
}
{\sum_{j\in\mathcal{J}_k} E_j}
\lesssim
\frac{k^{1-2/d}}{N_k}
\end{equation}
is exactly \eqref{eq:threshold-bound} from the proof of
Lemma~\ref{lem:packet-zero-mean}.
Hence cancellation follows when $N_k/k^{1-2/d}\to\infty$.
If $N_k\lesssim k^{1-2/d}$ along a subsequence, this upper bound is not
forced to vanish, so cancellation cannot be concluded by this method alone.
\qed
\end{proof}

\begin{remark}[Method-detectability and openness of sharpness]
The threshold $N_k\gg k^{1-2/d}$ is a \emph{sufficient} condition forced by
the $O(\Lambda^{d/2})$ remainder in the boundary local Weyl expansion after
cancellation of the leading term via $\int_{\partial\Omega} w\,d\sigma=0$.
The exponent $1-2/d$ reflects the size of second-term Weyl fluctuations:
for $d=2$ the condition reduces to $N_k\to\infty$;
for $d\ge3$ it requires mildly larger sublinear packets.
The question of whether cancellation can occur \emph{below} this scale---i.e.,
whether $N_k\gg k^{1-2/d}$ is also necessary---depends on finer spectral
information beyond the classical Weyl remainder bounds and remains open.
The present threshold is, however, the minimal scale at which the
Rellich--Weyl mechanism can detect cancellation.
It is therefore the natural packet size for deformation-stable arguments
in which cancellation must be uniform over compact families of domains.
This makes the packet-scale threshold particularly natural in
geometric deformation settings (e.g.\ smooth domain flows),
where modewise cancellation must persist uniformly in time.
We use only the classical $O(\Lambda^{(d-1)/2})$ remainder in the integrated
pairing with smooth $w$, available on any smooth strictly convex domain
without dynamical or non-periodicity assumptions; sharper remainders are
not required.
\end{remark}

\begin{remark}[Uniformity of constants]
All constants appearing in Lemma~\ref{lem:single-mode}
and Propositions~\ref{prop:threshold} and \ref{prop:packet-scale}
are for the fixed strictly convex
domain $\Omega$.
\end{remark}

\begin{proposition}[Enhanced cancellation under double vanishing moment]
\label{prop:enhanced-cancellation}
Let $\Omega\subset\mathbb{R}^d$ be a bounded $C^\infty$ strictly convex domain,
$d\ge2$, and let $H$ denote the mean curvature of $\partial\Omega$.
Let $w\in C^\infty(\partial\Omega)$ satisfy
\begin{equation}
\int_{\partial\Omega} w\,d\sigma=0
\quad\text{and}\quad
\int_{\partial\Omega} H\,w\,d\sigma=0.
\end{equation}
Let $N_k\to\infty$ with $N_k=o(k)$ and $N_k/k^{(d-3)/d}\to\infty$, and set
$\mathcal{J}_k=\{k,\dots,k+N_k-1\}$.
Then
\begin{equation}
\sum_{j\in\mathcal{J}_k}\int_{\partial\Omega} |\partial_n u_j|^2\,w\,d\sigma
=
o\!\left(\sum_{j\in\mathcal{J}_k}E_j\right)
\qquad(k\to\infty).
\end{equation}
This threshold is strictly weaker than $N_k/k^{1-2/d}\to\infty$ for all $d\ge3$.
\end{proposition}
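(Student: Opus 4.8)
The plan is to run the proof of Lemma~\ref{lem:packet-zero-mean} essentially unchanged, now exploiting the fact that the second hypothesis $\int_{\partial\Omega} H\,w\,d\sigma=0$ annihilates the \emph{second} term of the boundary local Weyl expansion \eqref{eq:Q-weyl} in addition to the leading one. First I would invoke \eqref{eq:Q-weyl}: since both $\int_{\partial\Omega} w\,d\sigma=0$ and $\int_{\partial\Omega} H\,w\,d\sigma=0$, the $A_\Omega\Lambda^{(d+1)/2}$ and $B_\Omega\Lambda^{d/2}$ contributions both drop out, leaving only the classical remainder. Hence there is a constant $C=C(\Omega,w)$ such that for all large $\Lambda$,
\begin{equation}
\left|\int_{\partial\Omega} Q_\Lambda(x)\,w(x)\,d\sigma(x)\right|\le C\,\Lambda^{\frac{d-1}{2}},
\end{equation}
improving the bound \eqref{eq:Qw-bound} by a factor $\Lambda^{1/2}$.

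Next I would reproduce the telescoping step verbatim. Choosing $\Lambda\in(\lambda_{k-1},\lambda_k)$ and $\Lambda'\in(\lambda_{k+N_k-1},\lambda_{k+N_k})$, identity \eqref{eq:packet-via-Q} expresses the packet sum as $\int_{\partial\Omega}(Q_{\Lambda'}-Q_\Lambda)\,w\,d\sigma$, and the improved bound together with $\lambda_{k+N_k}/\lambda_k\to1$ (Weyl's law and $N_k=o(k)$) gives
\begin{equation}
\left|\sum_{j\in\mathcal{J}_k}\int_{\partial\Omega}|\partial_n u_j|^2\,w\,d\sigma\right|
\lesssim (\Lambda')^{\frac{d-1}{2}}+\Lambda^{\frac{d-1}{2}}
\lesssim \lambda_k^{\frac{d-1}{2}}.
\end{equation}
Dividing by the lower bound $\sum_{j\in\mathcal{J}_k}E_j\ge c\,N_k\lambda_k$ from Lemma~\ref{lem:single-mode} and Proposition~\ref{prop:packet-scale}, and using $\lambda_k\asymp k^{2/d}$, yields
\begin{equation}
\frac{\left|\sum_{j\in\mathcal{J}_k}\int_{\partial\Omega}|\partial_n u_j|^2\,w\,d\sigma\right|}{\sum_{j\in\mathcal{J}_k}E_j}
\lesssim \frac{\lambda_k^{\frac{d-3}{2}}}{N_k}\asymp\frac{k^{(d-3)/d}}{N_k},
\end{equation}
which tends to zero precisely under the hypothesis $N_k/k^{(d-3)/d}\to\infty$. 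For $d=3$ the exponent $(d-3)/d$ vanishes, so the right-hand side is $\asymp 1/N_k$ and the condition collapses to $N_k\to\infty$.

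For the strict-weakening claim I would simply compare exponents: $(d-3)/d=(d-2)/d-1/d<1-2/d$ for every $d$, so $k^{(d-3)/d}=o(k^{1-2/d})$. Consequently any window satisfying $N_k/k^{1-2/d}\to\infty$ automatically satisfies $N_k/k^{(d-3)/d}\to\infty$, while intermediate windows (e.g.\ $N_k\asymp k^{(d-3)/d}\log k$, or for $d=3$ simply $N_k\asymp\log k$) satisfy the latter but not the former; this gives strict weakening for all $d\ge3$.

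The argument presents no genuine analytic obstacle beyond Lemma~\ref{lem:packet-zero-mean}: the single substantive point is the bookkeeping observation that once both geometric moments vanish, the surviving order in \eqref{eq:Q-weyl} is \emph{exactly} the classical remainder $\Lambda^{(d-1)/2}$. The hard part, such as it is, will be to confirm that no further explicit $\Lambda^{(d-1)/2}$ term carrying a third curvature moment of $w$ has been separated out of the stated remainder, so that the two imposed conditions indeed suffice to reach this exponent and no more. This also delimits the method: reaching a scale below $k^{(d-3)/d}$ would require cancelling a third moment---the next stratum of the hierarchy---rather than any sharpening of the present estimate.
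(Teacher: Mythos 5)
Your proposal is correct and follows essentially the same route as the paper's own proof: both vanishing moments kill the $A_\Omega\Lambda^{(d+1)/2}$ and $B_\Omega\Lambda^{d/2}$ terms in \eqref{eq:Q-weyl}, the telescoping argument of Lemma~\ref{lem:packet-zero-mean} is repeated with the improved remainder $O(\Lambda^{(d-1)/2})$, and the two-sided packet bound of Proposition~\ref{prop:packet-scale} yields the ratio $\lesssim k^{(d-3)/d}/N_k$, with the same exponent comparison $(d-3)/d<1-2/d$ for the strict-weakening claim. Your closing remark about a hypothetical separated-out third-moment term is moot given that the paper takes the two-term expansion \eqref{eq:Q-weyl} with $O(\Lambda^{(d-1)/2})$ remainder as its stated input, exactly as you do.
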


\begin{proof}
Since both $\int_{\partial\Omega} w\,d\sigma=0$ and
$\int_{\partial\Omega} H\,w\,d\sigma=0$, both leading terms in the boundary
local Weyl expansion~\eqref{eq:Q-weyl} cancel, yielding
\begin{equation}
\left|\int_{\partial\Omega} Q_\Lambda(x)\,w(x)\,d\sigma(x)\right|
\le C\,\Lambda^{\frac{d-1}{2}}
\end{equation}
for all sufficiently large $\Lambda$.
Applying the same telescoping argument as in the proof of
Lemma~\ref{lem:packet-zero-mean} and using the two-sided bound of
Proposition~\ref{prop:packet-scale}, one obtains
\begin{equation}
\frac{\left|\sum_{j\in\mathcal{J}_k}\int_{\partial\Omega}|\partial_n u_j|^2\,w\,d\sigma
\right|}{\sum_{j\in\mathcal{J}_k}E_j}
\;\lesssim\;
\frac{\lambda_k^{\frac{d-3}{2}}}{N_k}
\;\asymp\;
\frac{k^{\frac{d-3}{d}}}{N_k},
\end{equation}
which tends to zero under $N_k/k^{(d-3)/d}\to\infty$.
The comparison with the first-moment threshold follows from
$(d-3)/d < 1-2/d$ for all $d\ge2$, with strict improvement for $d\ge3$:
(i)~$d=2$: both thresholds reduce to $N_k\to\infty$;
(ii)~$d=3$: first-moment threshold is $N_k\gg k^{1/3}$,
double-moment threshold reduces to $N_k\to\infty$;
(iii)~$d\ge4$: first-moment threshold is $N_k\gg k^{1-2/d}$,
double-moment threshold is $N_k\gg k^{(d-3)/d}$ with $(d-3)/d < 1-2/d$.
\qed
\end{proof}

\section{Cancellation hierarchy and mode-to-packet comparison}
We now establish the threshold hierarchy for boundary cancellation and derive
the structural mode-to-packet comparison as a consequence of the Rellich identity.

\begin{theorem}[Sharp two-sided packet energy comparison]
\label{thm:boundary-delocalization}
Let $\Omega\subset\mathbb{R}^d$ be a bounded $C^\infty$ strictly convex domain.
Let $N_k$ satisfy $N_k\to\infty$ and $N_k=o(k)$.
Then there exist constants $0<c\le C$ depending only on $\Omega$ and $k_0$
such that for all $k\ge k_0$,
\begin{equation}
\frac{c}{N_k}
\;\le\;
\frac{E_k}{\displaystyle\sum_{m=k}^{k+N_k-1}E_m}
\;\le\;
\frac{C}{N_k}.
\end{equation}
In particular, no single eigenmode can asymptotically carry a fixed positive
proportion of the packet boundary energy on any sublinear spectral window,
and the rate of decay $1/N_k$ is sharp.
\end{theorem}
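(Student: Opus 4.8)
The plan is to obtain the two-sided bound by directly combining the single-mode estimate of Lemma~\ref{lem:single-mode} with the packet-scale estimate of Proposition~\ref{prop:packet-scale}, exploiting the fact that both the numerator and the denominator are comparable to $\lambda_k$, so that the eigenvalue factor cancels and only the combinatorial factor $N_k$ survives. No new analytic input is needed beyond these two already-established facts.

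First I would fix the two pairs of constants supplied by the earlier results. From Lemma~\ref{lem:single-mode} there are constants $0<c_1\le C_1$, depending only on $\Omega$, with $c_1\lambda_k \le E_k \le C_1\lambda_k$ for all $k\ge 1$. From Proposition~\ref{prop:packet-scale} there are constants $0<c_2\le C_2$ and an index $k_0$, again depending only on $\Omega$, with
\begin{equation*}
c_2\,N_k\,\lambda_k \;\le\; \sum_{m=k}^{k+N_k-1} E_m \;\le\; C_2\,N_k\,\lambda_k
\qquad\text{for all } k\ge k_0,
\end{equation*}
where I use that $\sum_{m=k}^{k+N_k-1}E_m=\sum_{m\in\mathcal{J}_k}E_m$ is merely a change of notation.

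Next I would form the ratio and bound it in each direction by pairing the numerator and denominator estimates in opposite senses, so that the factor $\lambda_k$ cancels. For the upper bound I combine $E_k\le C_1\lambda_k$ with the lower packet bound to get
\begin{equation*}
\frac{E_k}{\sum_{m=k}^{k+N_k-1}E_m}
\;\le\;
\frac{C_1\,\lambda_k}{c_2\,N_k\,\lambda_k}
\;=\;
\frac{C_1/c_2}{N_k}.
\end{equation*}
For the lower bound I combine $E_k\ge c_1\lambda_k$ with the upper packet bound to get $E_k/\sum_m E_m \ge (c_1/C_2)/N_k$. Setting $c:=c_1/C_2$ and $C:=C_1/c_2$ then yields the stated two-sided inequality for all $k\ge k_0$, with $0<c\le C$ since $c\le C$ is equivalent to $c_1 c_2\le C_1 C_2$, which follows from $c_1\le C_1$ and $c_2\le C_2$.

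The sharpness assertion requires no further work: the same inequalities produce matching upper and lower bounds of exact order $1/N_k$, so the decay rate cannot be replaced by any faster-decaying sequence, which is precisely the content of the earlier sharpness remark. I do not anticipate a genuine obstacle, since the entire analytic content is already packaged in the Rellich two-sided bound and its packet sum. The only point worth emphasizing is the clean cancellation of the $\lambda_k$ factor: it is exactly this cancellation that renders the estimate independent of Weyl asymptotics, eigenvalue monotonicity, and eigenvalue crossings, leaving a purely structural $1/N_k$ law.
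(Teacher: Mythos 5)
Your proposal is correct and matches the paper's own proof essentially verbatim: both arguments cross-pair the Rellich two-sided bound $c_1\lambda_k\le E_k\le C_1\lambda_k$ of Lemma~\ref{lem:single-mode} with the packet comparison $c_2 N_k\lambda_k\le\sum_{m\in\mathcal{J}_k}E_m\le C_2 N_k\lambda_k$ of Proposition~\ref{prop:packet-scale}, letting the $\lambda_k$ factor cancel to leave the $1/N_k$ law. Nothing is missing, and your observation that this cancellation is what makes the estimate independent of Weyl asymptotics is exactly the point the paper makes in its subsequent remark.
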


\begin{proof}
\emph{Upper bound.}
By Lemma~\ref{lem:single-mode}, $E_k\le C_1\lambda_k$.
By Proposition~\ref{prop:packet-scale},
$\sum_{m=k}^{k+N_k-1} E_m \ge c_1\,N_k\,\lambda_k$ for $k\ge k_0$.
Hence
\begin{equation}
\frac{E_k}{\sum_{m=k}^{k+N_k-1} E_m}
\le
\frac{C_1}{c_1\,N_k}.
\end{equation}

\emph{Lower bound.}
By Lemma~\ref{lem:single-mode}, $E_k\ge c_0\lambda_k$.
By the upper bound in Proposition~\ref{prop:packet-scale},
$\sum_{m=k}^{k+N_k-1} E_m \le C_2\,N_k\,\lambda_k$ for $k\ge k_0$.
Hence
\begin{equation}
\frac{E_k}{\sum_{m=k}^{k+N_k-1} E_m}
\ge
\frac{c_0}{C_2\,N_k}.
\end{equation}
Setting $c=c_0/C_2$ and $C=C_1/c_1$ completes the proof.
\qed
\end{proof}
\begin{remark}[Optimality of the rate]
The two-sided bound shows that $E_k/\sum E_m \asymp 1/N_k$ precisely.
The lower bound is not a consequence of any anomalous concentration: it holds
for every domain and every sequence $N_k=o(k)$, simply because
$E_k \ge c\lambda_k$ and $\sum E_m \le CN_k\lambda_k$ both follow from
the Rellich identity alone.
In other words, the packet sum cannot grow faster than $N_k\lambda_k$, so
the single-mode contribution can never be smaller than order $1/N_k$
relative to the packet.
This demonstrates that the delocalization rate is dictated by the global
spectral structure, not by any special geometric property of individual modes.
\end{remark}

\begin{corollary}[Control of weighted boundary functionals]
\label{cor:weighted}
Let $w \in L^\infty(\partial\Omega)$ be any bounded geometric weight.
Then the weighted energy of a single mode is dominated by the unweighted packet sum:
\begin{equation}
\frac{\left|\int_{\partial\Omega} |\partial_n u_k|^2 \, w(x) \, d\sigma\right|}
{\sum_{m=k}^{k+N_k-1} E_m} \longrightarrow 0
\quad \text{as } k\to\infty.
\end{equation}
\end{corollary}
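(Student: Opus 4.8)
The plan is to combine the two two-sided estimates already established earlier in the paper: the weighted single-mode upper bound from Lemma~\ref{lem:single-mode} controls the numerator, and the lower bound from Proposition~\ref{prop:packet-scale} controls the denominator. No cancellation or zero-mean hypothesis on $w$ is needed here; the conclusion is purely a matter of size, so mere boundedness of $w$ suffices.

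First I would observe that the numerator is exactly $|E_k(w)|$ in the notation of the preliminaries, now with $w$ ranging over all of $L^\infty(\partial\Omega)$. By the weighted bound in Lemma~\ref{lem:single-mode},
\[
\left|\int_{\partial\Omega}|\partial_n u_k|^2\,w\,d\sigma\right|
= |E_k(w)|
\le E_k^{abs}(w)
\le C\,\lambda_k\,\|w\|_{L^\infty(\partial\Omega)}.
\]
Next I would bound the denominator from below. By Proposition~\ref{prop:packet-scale} there are constants $c>0$ and $k_0$ such that $\sum_{m=k}^{k+N_k-1} E_m \ge c\,N_k\,\lambda_k$ for all $k\ge k_0$. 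Dividing the two displays, the common factor $\lambda_k$ cancels and we obtain, for $k\ge k_0$,
\[
\frac{\left|\int_{\partial\Omega}|\partial_n u_k|^2\,w\,d\sigma\right|}
{\sum_{m=k}^{k+N_k-1} E_m}
\le
\frac{C\,\|w\|_{L^\infty(\partial\Omega)}}{c\,N_k}.
\]
Since $N_k\to\infty$, the right-hand side tends to zero, which is the claim.

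The argument has no genuine obstacle, since both inputs are consequences of the Rellich identity established above; the only subtlety is to notice that the weighted single-mode estimate, applied with the \emph{absolute value} $|w|$, sidesteps any need for oscillation or moment information about $w$. The point worth emphasizing is therefore structural rather than technical: unlike the cancellation statements in Lemma~\ref{lem:packet-zero-mean} and Proposition~\ref{prop:enhanced-cancellation}, here the decay is forced purely by the mismatch in scaling---a single mode contributes $O(\lambda_k)$ boundary energy while the packet accumulates energy of order $N_k\,\lambda_k$---and not by any vanishing-moment property of $w$. In particular the conclusion holds verbatim for $w\equiv 1$, recovering the upper bound of Theorem~\ref{thm:boundary-delocalization} as the special case of the constant weight.
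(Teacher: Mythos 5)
Your proof is correct and essentially identical to the paper's: the paper simply bounds $|E_k(w)|\le \|w\|_{L^\infty}E_k$ and cites the upper bound of Theorem~\ref{thm:boundary-delocalization}, whose proof is exactly the combination of Lemma~\ref{lem:single-mode} and Proposition~\ref{prop:packet-scale} that you unfold by hand. Your closing structural remarks (no moment condition needed, $w\equiv 1$ recovering the unweighted bound) are accurate but go beyond what the paper's two-line proof records.
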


\begin{proof}
By Lemma~\ref{lem:single-mode}, $|E_k(w)| \le E_k^{abs}(w)\le \|w\|_\infty E_k$.
The result follows immediately from Theorem~\ref{thm:boundary-delocalization}.
\qed
\end{proof}
\begin{remark}[Weighted energies]
The same comparison can be extended to weighted boundary energies under
additional assumptions on the weight; this is not used in the present paper.
The geometric weight $w=|H-\bar H|$ is a natural example.
\end{remark}

\begin{theorem}[Packet zero--mean cancellation: multi--mode form]
\label{thm:multi-deloc}
Let $\Omega \subset \mathbb{R}^d$ be a bounded $C^\infty$ strictly convex domain.
Let $w \in C^\infty(\partial\Omega)$ satisfy
\begin{equation}
\int_{\partial\Omega} w\, d\sigma = 0 .
\end{equation}
Let $N_k \to \infty$ with $N_k = o(k)$ and $N_k/k^{1-2/d}\to\infty$.
Then the energy--weighted average of the boundary correlation coefficients
within the packet vanishes:
\begin{equation}
\frac{\displaystyle\sum_{j\in\mathcal{J}_k} E_j\, C_j(w)}
{\displaystyle\sum_{j\in\mathcal{J}_k} E_j}
\longrightarrow 0
\quad \text{as } k\to\infty.
\end{equation}
Equivalently,
\begin{equation}
\frac{\displaystyle\sum_{j\in\mathcal{J}_k}
\int_{\partial\Omega}|\partial_n u_j|^2\, w\, d\sigma}
{\displaystyle\sum_{j\in\mathcal{J}_k} E_j}
\longrightarrow 0.
\end{equation}
\end{theorem}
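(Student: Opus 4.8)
The plan is to observe that this theorem is a reformulation of the already-established Lemma~\ref{lem:packet-zero-mean} in the language of correlation coefficients; the argument reduces to unpacking one definition and dividing by a positive quantity. First I would record the identity built into the definition of $C_j(w)$: since $C_j(w) = E_j^{-1}\int_{\partial\Omega}\rho_j\,w\,d\sigma$, multiplying by $E_j$ gives
\begin{equation}
E_j\,C_j(w) = \int_{\partial\Omega} |\partial_n u_j|^2\,w\,d\sigma
\end{equation}
for every $j\in\mathcal{J}_k$. Summing over the packet shows that the numerators of the two displayed quotients in the statement are literally equal, so the two asserted limits express the same claim and it suffices to prove either one.

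Next I would apply Lemma~\ref{lem:packet-zero-mean} verbatim. Its hypotheses --- $w\in C^\infty(\partial\Omega)$ with $\int_{\partial\Omega} w\,d\sigma = 0$, together with $N_k\to\infty$, $N_k=o(k)$, and $N_k/k^{1-2/d}\to\infty$ --- coincide exactly with those assumed here, and its conclusion \eqref{eq:packet-zero-mean-lemma} reads
\begin{equation}
\sum_{j\in\mathcal{J}_k}\int_{\partial\Omega} |\partial_n u_j|^2\,w\,d\sigma = o\!\left(\sum_{j\in\mathcal{J}_k} E_j\right).
\end{equation}
Dividing by the packet energy $\sum_{j\in\mathcal{J}_k} E_j$, which is strictly positive and bounded below by $c\,N_k\,\lambda_k$ via Proposition~\ref{prop:packet-scale}, immediately yields the stated convergence to zero.

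I expect no genuine obstacle here: the mathematical content resides entirely in Lemma~\ref{lem:packet-zero-mean} and the boundary local Weyl law \eqref{eq:Q-weyl} behind it. The only steps meriting a line of care are the bookkeeping identity $E_j\,C_j(w) = E_j(w)$ and the remark that the denominator never vanishes, so that passing from the $o(\cdot)$ estimate to a quotient tending to zero is fully justified.
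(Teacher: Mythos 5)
Your proposal is correct and follows exactly the paper's own proof: both invoke Lemma~\ref{lem:packet-zero-mean} under identical hypotheses and divide by the packet energy, which Proposition~\ref{prop:packet-scale} shows is positive and of size $\asymp N_k\lambda_k$. The only addition, the bookkeeping identity $E_j\,C_j(w)=\int_{\partial\Omega}|\partial_n u_j|^2\,w\,d\sigma$ equating the two displayed quotients, is implicit in the paper and correctly handled.
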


\begin{proof}
By Lemma~\ref{lem:packet-zero-mean},
\begin{equation}
\sum_{j\in\mathcal{J}_k}\int_{\partial\Omega}|\partial_n u_j|^2\, w\, d\sigma
=
o\!\left(\sum_{j\in\mathcal{J}_k} E_j\right).
\end{equation}
Dividing by $\sum_{j\in\mathcal{J}_k} E_j$ (which is $\asymp N_k\lambda_k$ by
Proposition~\ref{prop:packet-scale}) gives the result.
\qed
\end{proof}

\begin{remark}[Quantitative rate and uniformity over compact domain families]
\label{rem:uniform-rate}
The proof of Theorem~\ref{thm:multi-deloc} yields the explicit quantitative bound
\begin{equation}
\left|
\frac{\displaystyle\sum_{j\in\mathcal{J}_k}
\int_{\partial\Omega}|\partial_n u_j|^2\, w\, d\sigma}
{\displaystyle\sum_{j\in\mathcal{J}_k} E_j}
\right|
\le
C(\Omega,w)\,\frac{k^{1-2/d}}{N_k},
\end{equation}
where $C(\Omega,w)$ depends on $\Omega$ only through its $C^\infty$ geometry
(Rellich constants and the boundary Weyl law remainder) and on $w$ only through
$\|w\|_{C^\infty(\partial\Omega)}$.
Since the boundary Weyl law remainder bound~\eqref{eq:Qw-bound} is uniform on
compact $C^\infty$ families of strictly convex domains
\cite{SafarovVassiliev,Ivrii}, and the Rellich lower bound for
$\sum_{j\in\mathcal{J}_k}E_j$ is similarly uniform by compactness, the constant
$C(\Omega,w)$ may be chosen uniformly whenever $\Omega$ varies in a compact
$C^\infty$ family and $\|w\|_{C^\infty}$ is uniformly bounded.
In particular, for a compact smooth flow $\{\Omega_t\}_{t\in[0,T]}$ with weight
$w_t$ satisfying $\int_{\partial\Omega_t} w_t\,d\sigma_t=0$ and $\sup_t\|w_t\|_{C^\infty}<\infty$,
\begin{equation}\label{eq:uniform-rate}
\sup_{t\in[0,T]}
\left|
\frac{\displaystyle\sum_{j\in\mathcal{J}_k}
\int_{\partial\Omega_t}|\partial_n u_j(t)|^2\, w_t\, d\sigma_t}
{\displaystyle\sum_{j\in\mathcal{J}_k} E_j(t)}
\right|
\le
C\,\frac{k^{1-2/d}}{N_k}
\longrightarrow 0,
\end{equation}
with $C$ independent of $t$.
\end{remark}

\begin{theorem}[Enhanced packet cancellation under double vanishing moment]
\label{thm:multi-deloc-enhanced}
Let $\Omega \subset \mathbb{R}^d$ be a bounded $C^\infty$ strictly convex domain.
Let $w \in C^\infty(\partial\Omega)$ satisfy
\begin{equation}
\int_{\partial\Omega} w\, d\sigma = 0
\quad\text{and}\quad
\int_{\partial\Omega} H\,w\,d\sigma = 0.
\end{equation}
Let $N_k \to \infty$ with $N_k = o(k)$ and $N_k/k^{(d-3)/d}\to\infty$.
Then
\begin{equation}
\frac{\displaystyle\sum_{j\in\mathcal{J}_k} E_j\, C_j(w)}
{\displaystyle\sum_{j\in\mathcal{J}_k} E_j}
\longrightarrow 0
\quad \text{as } k\to\infty.
\end{equation}
For $d=3$ this holds under the minimal condition $N_k\to\infty$,
and for $d\ge4$ the threshold $N_k\gg k^{(d-3)/d}$ is strictly weaker
than the threshold $N_k\gg k^{1-2/d}$ of Theorem~\ref{thm:multi-deloc}.
\end{theorem}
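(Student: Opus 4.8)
The plan is to derive this statement as an immediate corollary of Proposition~\ref{prop:enhanced-cancellation}, in exactly the way Theorem~\ref{thm:multi-deloc} follows from Lemma~\ref{lem:packet-zero-mean}. The first step I would take is to unwind the definition of the boundary correlation coefficient so that the numerator becomes a bare boundary integral. By the definition of $C_j(w)$ (legitimate since $E_j>0$ by Lemma~\ref{lem:single-mode} and $\int_{\partial\Omega} w\,d\sigma=0$ is assumed),
\[
E_j\,C_j(w) = E_j\cdot\frac{1}{E_j}\int_{\partial\Omega}|\partial_n u_j|^2\,w\,d\sigma = \int_{\partial\Omega}|\partial_n u_j|^2\,w\,d\sigma,
\]
so that $\sum_{j\in\mathcal{J}_k} E_j\,C_j(w)=\sum_{j\in\mathcal{J}_k}\int_{\partial\Omega}|\partial_n u_j|^2\,w\,d\sigma$ and the two quotients displayed in the statement are literally identical.

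Second, I would invoke Proposition~\ref{prop:enhanced-cancellation} directly: under the double vanishing hypothesis $\int_{\partial\Omega} w\,d\sigma=0$ and $\int_{\partial\Omega} H\,w\,d\sigma=0$, together with $N_k/k^{(d-3)/d}\to\infty$, it gives
\[
\sum_{j\in\mathcal{J}_k}\int_{\partial\Omega}|\partial_n u_j|^2\,w\,d\sigma=o\!\left(\sum_{j\in\mathcal{J}_k}E_j\right).
\]
Since $\sum_{j\in\mathcal{J}_k}E_j\asymp N_k\lambda_k>0$ for all large $k$ by Proposition~\ref{prop:packet-scale}, dividing both sides by this strictly positive quantity yields the claimed convergence to zero.

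The remaining assertions are purely arithmetic comparisons of exponents, which I would verify last. For $d=3$ the exponent $(d-3)/d$ equals $0$, hence $k^{(d-3)/d}=1$ and the hypothesis $N_k/k^{(d-3)/d}\to\infty$ collapses to the minimal condition $N_k\to\infty$ (recall $N_k=o(k)$ is standing). For $d\ge4$, writing the first-moment threshold exponent from Theorem~\ref{thm:multi-deloc} as $1-2/d=(d-2)/d$ and comparing it with $(d-3)/d$, the inequality $d-3<d-2$ gives $(d-3)/d<(d-2)/d$, so the enhanced threshold is strictly weaker, exactly as stated.

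I do not expect a genuine obstacle here: the entire analytic content — the cancellation of both leading terms of the boundary local Weyl expansion~\eqref{eq:Q-weyl} via the two vanishing moments, followed by the telescoping remainder bound $\big|\int_{\partial\Omega}(Q_{\Lambda'}-Q_\Lambda)\,w\,d\sigma\big|\lesssim\lambda_{k+N_k}^{(d-1)/2}$ — has already been carried out in Proposition~\ref{prop:enhanced-cancellation}. The only points requiring care are bookkeeping ones: confirming the numerator identity through the definition of $C_j(w)$ and checking strict positivity of the denominator before dividing, both of which are furnished by the results established above.
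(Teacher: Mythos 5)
Your proposal is correct and follows essentially the same route as the paper: the paper's proof likewise derives the theorem by invoking Proposition~\ref{prop:enhanced-cancellation} and dividing by the packet energy sum, with the identity $E_j\,C_j(w)=\int_{\partial\Omega}|\partial_n u_j|^2\,w\,d\sigma$ implicit in the equivalent formulation already used in Theorem~\ref{thm:multi-deloc}. Your additional bookkeeping --- positivity of the denominator via Proposition~\ref{prop:packet-scale} and the exponent arithmetic for $d=3$ and $d\ge4$ --- is accurate and matches what the paper records inside the proof of Proposition~\ref{prop:enhanced-cancellation}.
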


\begin{proof}
By Proposition~\ref{prop:enhanced-cancellation},
\begin{equation}
\sum_{j\in\mathcal{J}_k}\int_{\partial\Omega}|\partial_n u_j|^2\, w\, d\sigma
=
o\!\left(\sum_{j\in\mathcal{J}_k} E_j\right).
\end{equation}
Dividing by $\sum_{j\in\mathcal{J}_k} E_j$ gives the result.
\qed
\end{proof}

\begin{remark}[Two-level hierarchy and extensibility]
Theorems~\ref{thm:multi-deloc} and~\ref{thm:multi-deloc-enhanced} together
exhibit a \emph{two-level} hierarchy of cancellation thresholds indexed by
the vanishing order of boundary curvature moments of $w$:
(i)~$\int w\,d\sigma = 0$ only: threshold $N_k \gg k^{1-2/d}$;
(ii)~$\int w\,d\sigma = 0$ and $\int H\,w\,d\sigma = 0$:
threshold $N_k \gg k^{(d-3)/d}$, strictly weaker for $d\ge3$.
These two levels are canonical: they correspond precisely to the first two
universal terms in the Safarov--Vassiliev boundary Weyl expansion
\eqref{eq:Q-weyl}, and each level is determined by the
vanishing order of the corresponding expansion coefficient against $w$.
In principle the same argument extends to higher-order moments whenever
further terms in the Weyl expansion are available and vanish against $w$;
we restrict to the first two levels since they cover the main geometric
cases and are sufficient for applications.
The natural weight $w = H - \bar H$ satisfies the first condition by
definition, but not necessarily the second unless $H$ is constant
(i.e., $\Omega$ is a ball).
For perturbations away from a ball, both theorems apply with the appropriate
threshold, providing finer cancellation-scale information near round domains.
\end{remark}

\section*{Conclusion}

The principal new contribution of this paper is the identification of a
threshold hierarchy for boundary correlation cancellation of Dirichlet
eigenfunctions on smooth strictly convex domains.
The minimal packet scale required for the energy--weighted average of boundary
correlation coefficients to vanish is determined by the vanishing order of
curvature moments of the weight with respect to domain geometry.
Theorems~\ref{thm:multi-deloc} and~\ref{thm:multi-deloc-enhanced} establish
two levels of this hierarchy, via the boundary local Weyl law
\cite{SafarovVassiliev,Ivrii,BransonGilkey}:
(i)~If $\int_{\partial\Omega} w\,d\sigma=0$, packet zero--mean cancellation
holds at threshold $N_k\gg k^{1-2/d}$.
(ii)~If additionally $\int_{\partial\Omega} H\,w\,d\sigma=0$, the threshold
weakens to $N_k\gg k^{(d-3)/d}$, which for $d=3$ reduces to $N_k\to\infty$.
This hierarchy, stratified by the vanishing order of boundary curvature moments
of $w$, is a new structural feature of the boundary spectral problem with no
analogue in bulk delocalization results.

As a structural consequence of the Rellich identity alone,
Theorem~\ref{thm:boundary-delocalization} establishes that the ratio of the
boundary energy of a single eigenmode to the packet sum satisfies
$c/N_k \le E_k/\sum_{m=k}^{k+N_k-1}E_m \le C/N_k$.
The rate $1/N_k$ is sharp and follows from global Rellich bounds alone,
without any use of Weyl asymptotics or microlocal analysis.

The argument is robust under eigenvalue crossings and depends only on global
spectral information.
This structural invariance makes the packet framework particularly natural for
deformation--type questions, where multiplicities and mode labeling may vary.

Several directions for further investigation suggest themselves.
First, it would be natural to extend the argument to other boundary conditions
or to more general elliptic operators with boundary.
Second, one may ask whether analogous packet--level delocalization phenomena
hold on nonconvex domains or on Riemannian manifolds with boundary, where
boundary Weyl remainders are less rigid.
Third, the sharpness of the enhanced threshold $k^{(d-3)/d}$ under the double
vanishing moment condition, and of the threshold hierarchy in higher dimensions,
remains to be investigated.
Finally, the present comparison principle may serve as a useful input in the
study of boundary spectral statistics and correlations, complementing existing
microlocal and dynamical approaches.

We expect that the mode--to--packet perspective introduced here can be applied
in a broader range of boundary spectral problems where individual eigenfunction
control is too weak, but averaged spectral information remains accessible.

\clearpage

\end{document}